\newcommand{\stir}{\genfrac{[}{]}{0pt}{}}
\newtheorem{obs} [subsection]{Remark}
\newtheorem{prop}[subsection]{Proposition}
\newtheorem{teor}[subsection]{Theorem}
\newtheorem{lema}[subsection]{Lemma}
\newtheorem{cor} [subsection]{Corollary}
\newcommand{\paa}{p_{\mathbf a}}
\newcommand{\pak}{p_{k,\mathbf a}}
\newcommand{\Pak}{P_{k,\mathbf a}}
\newcommand{\Pa}{P_{\mathbf a}}
\newcommand{\za}{\zeta_{\mathbf a}}
\newcommand{\zak}{\zeta_{k,\mathbf a}}
\def\p{\operatorname{p}}
\def\pp{\operatorname{pp}}
\begin{document}
\selectlanguage{english}
\frenchspacing

\numberwithin{equation}{section}

\title{On the restricted $k$-multipartition function}
\author{Mircea Cimpoea\c s$^1$ and Alexandra Teodor$^2$}
\date{}

\maketitle

\footnotetext[1]{ \emph{Mircea Cimpoea\c s}, University Politehnica of Bucharest, Faculty of
Applied Sciences, 
Bucharest, 060042, Romania and Simion Stoilow Institute of Mathematics, Research unit 5, P.O.Box 1-764,
Bucharest 014700, Romania, E-mail: mircea.cimpoeas@upb.ro,\;mircea.cimpoeas@imar.ro}
\footnotetext[2]{ \emph{Alexandra Teodor}, University Politehnica of Bucharest, Faculty of
Applied Sciences, 
Bucharest, 060042, E-mail: alexandra.teodor@upb.ro}

\begin{abstract}
Let $\mathbf a=(a_1,\ldots,a_r)$ be a sequence of positive integers and $k\geq 2$ an integer.
We study $\pak(n)$, the restricted $k$-multipartition function associated to $\mathbf a$ and $k$. We prove new
formulas for $\pak(n)$, its waves $W_j(n,k,\mathbf a)$'s and its polynomial part $\Pak(n)$. Also, we give a lower bound for the density of
the set $\{n\geq 0\;:\;\pak(n)\not\equiv 0(\bmod\;m)\}$, where $m\geq 2$ is an integer.

\textbf{Keywords}: Integer partition, Restricted partition function, Multipartition.

\textbf{MSC2010}: 11P81, 11P83.
\end{abstract}

\maketitle
\section{Introduction}

Let $n$ be a positive integer. We denote $[n]=\{1,2,\ldots,n\}$.
A partition of $n$ is a non-increasing sequence $\lambda=(\lambda_1,\ldots,\lambda_m)$ of positive integers such that 
$|\lambda|=\lambda_1+\cdots+\lambda_m=n$. We define
$\p(n)$ as the number of partitions of $n$ and for convenience, we define $p(0) = 1$.
This notion has the following generalization: 

Let $k\geq 2$ be an integer.
A $k$-component multipartition of $n$ is a $k$-tuple $\lambda=(\lambda^1,\ldots,\lambda^k)$ of partitions of $n$
such that $|\lambda|=|\lambda^1|+\cdots+|\lambda^k|=n$; see \cite{andrews2}. We denote $p_k(n)$, the number or
$k$-component multipartitions of $n$ and $p_k(0)=1$.

Let $\mathbf a := (a_1, a_2, \ldots , a_r)$ be a sequence of positive integers, $r \geq 1$. The \emph{restricted partition
function} associated to $\mathbf a$ is $\paa : \mathbb N \to \mathbb N$, $\paa(n) :=$ the number of integer solutions $(x_1, \ldots, x_r)$
of $\sum_{i=1}^r a_ix_i = n$ with $x_i \geq 0$. Note that the generating function of $\paa(n)$ is
\begin{equation}\label{gen}
\sum_{n=0}^{\infty}\paa(n)z^n= \frac{1}{(1-z^{a_1})\cdots(1-z^{a_r})},\;|z|<1.
\end{equation}

The \emph{restricted $k$-multipartion function} associated to $\mathbf a$ is $\pak(n):\mathbb N \to \mathbb N$, 
$\pak(n):=$ the number of vector solutions
$(x^1,\ldots,x^k)$ of $$\sum_{j=1}^k\sum_{i=1}^r a_ix^j_i=n,
\text{ where }x^j=(x^j_1,\ldots,x^j_r)\in \mathbb N^r\text{ for }1\leq j\leq k.$$

The aim of the paper is to study the properties of the function $\pak(n)$, following the methods used in our previous paper \cite{bul}.

We consider the sequence
\begin{equation}\label{ak}
\mathbf a[k]:=(a_1^{[k]},a_2^{[k]},\ldots,a_r^{[k]}),
\end{equation}
where $\ell^{[k]}$ denotes $k$ copies of $\ell$.

It is easy to see that $\pak(n)=p_{\mathbf a[k]}(n)$ and therefore, from \eqref{gen} and \eqref{ak} we have
\begin{equation}\label{genk}
\sum_{n=0}^{\infty}\pak(n)z^n= \frac{1}{(1-z^{a_1})^k\cdots(1-z^{a_r})^k},\;|z|<1.
\end{equation}
In Proposition \ref{p1} we show that
$$\zak(s,w_1,\ldots,w_k):= \prod_{i=1}^k \za(s,w_i) = \sum_{n=0}^{\infty} \sum_{n_1+\cdots+n_k=n} \frac{\pak(n)}{(n_1+w_1)^s\cdots (n_k+w_k)^s},$$
where $\za(s,w)$ is the Barnes zeta function (see \cite{barnes}). 

In Proposition \ref{p2} we express $\zak(s,w_1,\ldots,w_k)$ in terms of
Hurwitz zeta functions.

Let $D$ be the least common multiple of $a_1,\ldots,a_r$. In Proposition \ref{p3} we note that 
$$\pak(n)=d_{k,\mathbf a,rk-1}(n)n^{rk-1} + \cdots + d_{k,\mathbf a,1}(n)n + d_{k,\mathbf a,0}(n),$$
is a quasi-polynomial of period $D$. From this result, we deduce a new expression for $\zak(s,w_1,\ldots,w_k)$ in Corollary \ref{26}.

In Theorem \ref{teo1} we prove formulas for the periodic functions $d_{k,\mathbf a,m}(n)$. 

Using the fact that $\pak(n)=p_{\mathbf a[k]}(n)$, in Theorem \ref{teo2} we prove a formula for $\pak(n)$. In Proposition \ref{dety} we
show that if a certain determinant is nonzero, then $\pak(n)$ can be expressed in terms of values of Bernoulli polynomials and 
Bernoulli-Barnes numbers. Using a result from \cite{graj}, in Corollary \ref{grajd} we show that
$$ \lim_{N\to\infty} \frac{\#\{n\leq N\;:\;\pak(n)\not\equiv 0(\bmod\; m)\}}{N} \geq \frac{1}{k \sum\limits_{i=1}^r a_i},$$
where $m>1$ is an integer.

Similarly to $\paa(n)$ we consider the \emph{Sylvester decomposition} (see \cite{sylvester}, \cite{sylvesterc} and \cite{sylv}) of $\pak(n)$ as a sums of "waves", i.e. 
$$\pak(n)=\sum_{j\geq 1}W_j(k,\mathbf a,n),$$ 
where $W_j(k,\mathbf a,n):=W_j(\mathbf a[k],n)$. In Theorem \ref{teo3} we prove a formula for $W_j(k,\mathbf a,n)$.

The polynomial part of $\pak(n)$ is $\Pak(n):=W_1(k,\mathbf a,n)$. 

In Theorem \ref{teo4} and Theorem \ref{teo5} we prove new formulas for $\Pak(n)$.

\section{Preliminary results}

Let $r\geq 1$ and $k\geq 2$ be two integers. Let $\mathbf a=(a_1,\ldots,a_r)$ be a sequence of positive integers.
Let $D$ be the least common multiple of $a_1,\ldots,a_r$. 

For $0\leq j_1\leq \frac{D}{a_1}-1$, $0\leq j_2 \leq \frac{D}{a_2}-1, \ldots, 0\leq j_r \leq \frac{D}{a_r}-1$ let, 
by Euclidean division, $\mathfrak q(j_1,\ldots,j_r)$ and $\mathfrak r(j_1,\ldots,j_r)$ be the unique integers such that
\begin{equation}\label{21}
  a_1j_1+\cdots+a_rj_r=\mathfrak q(j_1,\ldots,j_r)D + \mathfrak r(j_1,\ldots,j_r),\;\; 0\leq \mathfrak r(j_1,\ldots,j_r) \leq D-1.
\end{equation}
We denote the rising factorial by $x^{(r)}:=(x+1)(x+2)\cdots (x+r-1)$, $x^{(0)}=1$. It holds that 
\begin{equation}\label{22}
\binom{n+r-1}{r-1} = \frac{1}{(r-1)!}n^{(r)} = \frac{1}{(r-1)!} \left(\stir{r}{r} n^{r-1}+\cdots +\stir{r}{2} n + \stir{r}{1}\right),
\end{equation}
where $\stir{r}{k}$'s are the \emph{unsigned Stirling numbers} of the first kind.

Let $w>0$ be a real number. The \emph{Barnes zeta function} associated to $\mathbf a$ and $w$ is
\begin{equation}\label{23}
\za(s,w):=\sum_{u_1,\ldots,u_r\geq 0}\frac{1}{(a_1u_1+\cdots+a_ru_r+w)^s},\; Re(s)>r.
\end{equation}
For basic properties of the Barnes zeta function see \cite{barnes}, \cite{rui} and \cite{spreafico}.

Let $w_1,\ldots,w_k>0$ be some real numbers. We consider the function
\begin{equation}\label{24}
\zak(s,w_1,\ldots,w_k):=\za(s,w_1)\cdots \za(s,w_k).
\end{equation}

\begin{prop}\label{p1}
We have that
$$\zak(s,w_1,\ldots,w_k) = \sum_{n=0}^{\infty} \sum_{n_1+\cdots+n_k=n} \frac{\pak(n)}{(n_1+w_1)^s\cdots (n_k+w_k)^s}.$$
\end{prop}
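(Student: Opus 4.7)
The plan is to prove the identity by directly expanding the product of Barnes zeta functions, interchanging summations (justified by absolute convergence in the region $\operatorname{Re}(s) > r$), and then repackaging the resulting $rk$-fold sum by the total weights. Throughout, the key bookkeeping fact that makes everything fit is the convolution identity
$$\pak(n)=\sum_{n_1+\cdots+n_k=n}\paa(n_1)\cdots\paa(n_k),$$
which falls out of \eqref{genk} by noting that its right-hand side is the $k$-th power of \eqref{gen}.

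First I would substitute the series definition \eqref{23} into \eqref{24}, obtaining
$$\zak(s,w_1,\ldots,w_k)=\prod_{i=1}^{k}\sum_{u^i\in\mathbb N^{r}}\frac{1}{(a_1u^i_1+\cdots+a_ru^i_r+w_i)^{s}}.$$
For $\operatorname{Re}(s)>r$ each factor converges absolutely, so I may multiply the $k$ series out into a single absolutely convergent $rk$-fold sum indexed by $(u^1,\ldots,u^k)\in(\mathbb N^{r})^k$, with summand the product of the $k$ denominators raised to the power $-s$.

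Next I would group the terms according to the values $n_i:=a_1u^i_1+\cdots+a_ru^i_r$ for $1\le i\le k$. By the definition of $\paa$, the number of $u^i\in\mathbb N^{r}$ producing a prescribed $n_i$ is exactly $\paa(n_i)$, so after this regrouping the sum collapses to
$$\sum_{n_1,\ldots,n_k\geq 0}\frac{\paa(n_1)\cdots\paa(n_k)}{(n_1+w_1)^{s}\cdots(n_k+w_k)^{s}}.$$
Finally, pooling this by the grand total $n:=n_1+\cdots+n_k$ and applying the convolution identity above to identify the numerator with $\pak(n)$ in the desired form yields the claimed expression.

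Strictly speaking this is all bookkeeping, and the only place one needs to exercise care is to verify that the interchange of product and summation is legal — which is automatic from the absolute convergence of each $\za(s,w_i)$ for $\operatorname{Re}(s)>r$. So I do not expect any real obstacle; the content of the proposition is essentially just that the product of the Barnes series is literally the Dirichlet-type series encoding the multi-partition counts.
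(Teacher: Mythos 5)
Your proof is correct and is essentially the paper's own argument: the paper likewise expands each factor as $\za(s,w_j)=\sum_{n_j\ge 0}\paa(n_j)/(n_j+w_j)^s$ (i.e.\ grouping the lattice points $u^j\in\mathbb N^r$ by the value of the linear form, which is your step with the multiplicity $\paa(n_j)$), multiplies the $k$ series together, and then invokes the definition of $\pak(n)$ to conclude. The only caution is that your final step --- replacing the product $\paa(n_1)\cdots\paa(n_k)$ by $\pak(n)$ term-by-term inside the inner sum --- is exactly the same leap the paper's proof makes, and it is really an abuse of notation built into the statement itself: since the denominators $(n_1+w_1)^{-s}\cdots(n_k+w_k)^{-s}$ vary over the tuples with fixed sum $n$, the convolution identity $\pak(n)=\sum_{n_1+\cdots+n_k=n}\paa(n_1)\cdots\paa(n_k)$ does not let you substitute $\pak(n)$ inside the weighted inner sum, so the honest form of the identity keeps $\paa(n_1)\cdots\paa(n_k)$ in the numerator; this is a defect of the proposition as stated, not of your argument relative to the paper.
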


\begin{proof}
For $1\leq j\leq k$, we have that
$$\za(s,w_j)=\sum_{n_j=0}^{\infty}\frac{\paa(n_j)}{(n_j+w_j)^s}$$
Therefore, the conclusion follows from the definitions of $\pak(n)$ and $\zak(s,w_1,\ldots,w_k)$.
\end{proof}

\begin{obs}\rm
Note that, if $r=1$ and $1\leq j\leq k$ then 
$$\za(s,w_j)=\sum_{u_j=0}^{\infty}\frac{1}{(a_1u_j+w_j)^s}=\frac{1}{a_1^s}\sum_{u_j=0}^{\infty}\frac{1}{(u_j+\frac{w_j}{a_1})^s}
= \frac{1}{a_1^s}\zeta\left(s,\frac{a_1}{w_j}\right),$$
where $$\zeta(s,w):=\sum_{n=0}^{\infty}\frac{1}{(n+w)^s}, Re(s)>1$$ is the Hurwitz zeta function.
It follows that
$$\zak(s,w_1,\ldots,w_k)=\frac{1}{a_1^{ks}}\zeta\left(s,\frac{a_1}{w_1}\right)\cdots \zeta\left(s,\frac{a_k}{w_j}\right).$$
\end{obs}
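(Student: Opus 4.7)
The remark is a direct unwinding of the definitions \eqref{23} and \eqref{24} in the degenerate case $r=1$, so no deep input is needed. My plan is twofold: first establish the single-factor identity $\za(s,w_j)=a_1^{-s}\zeta(s,w_j/a_1)$, and then multiply through $j=1,\ldots,k$ to obtain the product formula.

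For the single-factor claim, I would specialize \eqref{23} to $r=1$: the $r$-fold sum over $(u_1,\ldots,u_r)\in\mathbb{N}^r$ collapses to the single sum $\sum_{u_j\geq 0}(a_1 u_j+w_j)^{-s}$. Factoring $a_1^s$ out of each denominator rewrites this as $a_1^{-s}\sum_{u_j\geq 0}(u_j+w_j/a_1)^{-s}$, which is precisely $a_1^{-s}\zeta(s,w_j/a_1)$ by the definition of the Hurwitz zeta function. The convergence region $\operatorname{Re}(s)>1$ of the Hurwitz zeta coincides with the condition $\operatorname{Re}(s)>r=1$ that already appears in \eqref{23}, so there is no analytic subtlety.

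For the product formula I would simply plug this identity into \eqref{24}. The $k$ copies of the scalar $a_1^{-s}$ combine into $a_1^{-ks}$, and the $k$ Hurwitz factors assemble into $\prod_{j=1}^{k}\zeta(s,w_j/a_1)$, which is the displayed identity.

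There is no real obstacle here; both steps are one-line algebraic rewritings. The only point of care is keeping the second argument of the Hurwitz zeta function in the form $w_j/a_1$ (as produced by the intermediate computation shown in the statement), rather than its reciprocal, since the two would have very different analytic behavior as functions of the parameters.
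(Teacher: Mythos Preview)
Your proposal is correct and follows exactly the route displayed in the remark itself: specialize \eqref{23} to $r=1$, factor $a_1^{-s}$ to recognize the Hurwitz zeta with argument $w_j/a_1$, and then take the product over $j$ using \eqref{24}. Your final caveat is also well taken: the intermediate computation forces the Hurwitz argument to be $w_j/a_1$, so the appearances of $a_1/w_j$ (and of $a_k$) in the displayed conclusion are evident misprints rather than a different claim.
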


We consider the set $$\mathbf B:=\{(j_1,\ldots,j_r)\;:\; 1\leq j_1\leq \frac{D}{a_1}-1,\ldots, 1\leq j_r \leq \frac{D}{a_r}-1 \}.$$
We recall the following result from \cite{lucrare}. Also, we mention that the definition of Stirling numbers is slightly 
different there; see \cite{cori} for more details.

\begin{lema}(\cite[Lemma 2.2]{lucrare})\label{lem22} We have 
$$\za(s,w)=\frac{1}{D^s(r-1)!} \sum_{(j_1,\ldots,j_r)\in\mathbf B} \sum_{k=0}^{r-1} \stir{r}{k+1}
\sum_{j=0}^k  (-1)^j\binom{k}{j}\left(\frac{a_1j_1+\cdots+a_rj_r+w}{D}\right)^j \times $$ 
$$\times \zeta(s-k+j,\frac{\mathfrak r(j_1,\ldots,j_r)+w}{D}).$$
\end{lema}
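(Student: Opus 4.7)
The plan is to compute $\za(s,w)$ directly from its defining series \eqref{23} by splitting the lattice $\mathbb N^r$ into residue classes modulo the ratios $D/a_i$. For each index $i$, write $u_i=(D/a_i)n_i+j_i$ with $n_i\ge 0$ and $j_i$ a residue modulo $D/a_i$; then $a_iu_i=Dn_i+a_ij_i$, and the Euclidean decomposition \eqref{21} gives
$$a_1u_1+\cdots+a_ru_r+w=D\bigl(n_1+\cdots+n_r+\mathfrak{q}(j_1,\ldots,j_r)\bigr)+\mathfrak{r}(j_1,\ldots,j_r)+w.$$
Pulling the factor $D^s$ out of the denominators turns \eqref{23} into a finite sum over residue tuples of an inner series in $(n_1,\ldots,n_r)\in\mathbb N^r$.

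The next step is to collapse the inner $r$-fold series into a single sum over $N=n_1+\cdots+n_r$, weighted by the composition count $\binom{N+r-1}{r-1}$. By \eqref{22} this binomial coefficient equals $\frac{1}{(r-1)!}\sum_{k=0}^{r-1}\stir{r}{k+1}N^k$, which explains both the prefactor $\frac{1}{(r-1)!}$ and the Stirling numbers $\stir{r}{k+1}$ in the statement. To convert the remaining $\sum_{N\ge 0}N^k(N+\alpha)^{-s}$ into Hurwitz zeta values, I would apply the binomial identity $N^k=\sum_{j=0}^k(-1)^j\binom{k}{j}\alpha^j(N+\alpha)^{k-j}$, which extracts the coefficient $(-1)^j\binom{k}{j}\alpha^j$ and reduces the inner series to $\sum_{N\ge 0}(N+\alpha)^{-(s-k+j)}=\zeta(s-k+j,\alpha)$.

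The main subtlety is reconciling the two shifts that appear in the statement: the coefficient $\alpha=(a_1j_1+\cdots+a_rj_r+w)/D$ equals $\mathfrak{q}+(\mathfrak{r}+w)/D$, whereas the Hurwitz zeta argument is the second summand $(\mathfrak{r}+w)/D$. These differ by the non-negative integer $\mathfrak{q}(j_1,\ldots,j_r)$, so a further index shift $N\mapsto N+\mathfrak{q}$ (equivalently, the Hurwitz identity $\zeta(s',\beta)=\sum_{i=0}^{\mathfrak{q}-1}(i+\beta)^{-s'}+\zeta(s',\beta+\mathfrak{q})$) is needed so that the stated argument $(\mathfrak{r}+w)/D$ appears inside the Hurwitz zeta while the shift $\alpha$ remains as the binomial coefficient. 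Threading this extra shift through the Stirling/binomial expansion, and checking that signs and binomials collect exactly as asserted, is the only non-routine bookkeeping; the core residue-class-plus-Stirling-expansion argument is standard.
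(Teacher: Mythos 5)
Your overall route --- splitting $\mathbb N^r$ into residue classes $u_i=(D/a_i)n_i+j_i$, collapsing to a single sum over $N=n_1+\cdots+n_r$ weighted by $\binom{N+r-1}{r-1}$, and expanding via \eqref{22} --- is the standard and correct one; note the paper itself gives no proof of this lemma, only the citation to \cite[Lemma 2.2]{lucrare}, so your argument must stand on its own. The genuine gap sits exactly at the step you defer as ``non-routine bookkeeping''. Write $\alpha=(a_1j_1+\cdots+a_rj_r+w)/D$ and $\beta=(\mathfrak r(j_1,\ldots,j_r)+w)/D$, so that $\alpha=\mathfrak q+\beta$ with $\mathfrak q=\mathfrak q(j_1,\ldots,j_r)$. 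Your binomial expansion yields $\sum_{N\ge 0}N^k(N+\alpha)^{-s}=\sum_{j=0}^k(-1)^j\binom{k}{j}\alpha^j\,\zeta(s-k+j,\alpha)$, whereas the lemma asserts the same coefficients against $\zeta(s-k+j,\beta)$. This discrepancy is \emph{not} removed by the index shift $N\mapsto N+\mathfrak q$ or by the Hurwitz identity alone: for each fixed $k$ the two versions differ by
$$\sum_{j=0}^k(-1)^j\binom{k}{j}\alpha^j\sum_{m=0}^{\mathfrak q-1}(m+\beta)^{-(s-k+j)}=\sum_{m=0}^{\mathfrak q-1}(m+\beta)^{-s}\,(m-\mathfrak q)^k,$$
which is nonzero in general (take $k=0$: it equals $\sum_{m=0}^{\mathfrak q-1}(m+\beta)^{-s}>0$ whenever $\mathfrak q\ge 1$). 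So a reader following your outline literally would get the formula with $\zeta(s-k+j,\alpha)$, not the stated one.

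The missing idea is that the cancellation happens only \emph{after} summing against the Stirling numbers. By \eqref{22}, the total error for a fixed residue tuple is
$$\sum_{m=0}^{\mathfrak q-1}(m+\beta)^{-s}\sum_{k=0}^{r-1}\stir{r}{k+1}(m-\mathfrak q)^k=\sum_{m=0}^{\mathfrak q-1}(m+\beta)^{-s}\prod_{t=1}^{r-1}(m-\mathfrak q+t),$$
and one must show each product vanishes. This requires the bound $\mathfrak q\le r-1$, which you never state; it follows from the residue ranges, since $a_1j_1+\cdots+a_rj_r\le rD-(a_1+\cdots+a_r)<rD$. Granted this, for every $0\le m\le\mathfrak q-1$ the integer $\mathfrak q-m$ lies in $\{1,\ldots,r-1\}$, so the factor with $t=\mathfrak q-m$ kills the product, the error vanishes, and the lemma follows. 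The decisive step is therefore not ``signs and binomials collecting'' but a vanishing argument for the rising factorial at the integers $m-\mathfrak q$; it is also the only place where the hypotheses $0\le j_i\le D/a_i-1$ genuinely enter --- without the bound $\mathfrak q\le r-1$ the asserted identity would be false. (Two minor points: absolute convergence for $\operatorname{Re}(s)>r$ is what justifies all the rearrangements, and the paper's displayed definition of $\mathbf B$ contains a typo --- the residues must start at $0$, as in \eqref{21}, which is the convention you correctly use.)
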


From \eqref{24} and Lemma \ref{lem22} it follows that:

\begin{prop}\label{p2}
We have
$$\zak(s,w_1,\ldots,w_k)=\frac{1}{D^s(r-1)!}  \sum_{m=0}^{r-1} \stir{r}{m+1} \sum_{\ell=0}^m (-1)^{\ell} \binom{m}{\ell} \times$$
 $$\times \prod_{i=1}^k \sum_{(j^i_1,\ldots,j^i_r)\in \mathbf B} \left(\frac{a_1j^i_1+\cdots+a_rj^i_r+w_i}{D}\right)^{\ell} 
 \zeta(s-m+\ell,\frac{\mathfrak r(j^i_1,\ldots,j^i_r)+w_i}{D}). $$
\end{prop}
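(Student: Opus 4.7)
The plan is straightforward: use the defining identity \eqref{24}, $\zak(s,w_1,\ldots,w_k)=\prod_{i=1}^{k}\za(s,w_i)$, and substitute the closed form supplied by Lemma \ref{lem22} into each of the $k$ factors.

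First, I would apply Lemma \ref{lem22} with $w=w_i$ for each $i\in\{1,\ldots,k\}$, using a fresh set of dummy indices $(j^i_1,\ldots,j^i_r)\in\mathbf B$ for each factor so that no index clash occurs. This writes $\za(s,w_i)$ as $\frac{1}{D^s(r-1)!}$ multiplied by a triple sum whose outer index is the Stirling index $m\in\{0,\ldots,r-1\}$ weighted by $\stir{r}{m+1}$, whose inner index is $\ell\in\{0,\ldots,m\}$ weighted by $(-1)^{\ell}\binom{m}{\ell}$, and whose summand contains the power $\bigl(\frac{a_1j^i_1+\cdots+a_rj^i_r+w_i}{D}\bigr)^{\ell}$ together with the Hurwitz factor $\zeta\bigl(s-m+\ell,\frac{\mathfrak r(j^i_1,\ldots,j^i_r)+w_i}{D}\bigr)$.

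Second, I would form the $k$-fold product of these expressions. Because the $\mathbf B$-sums in different factors involve independent dummy variables, the product distributes as $\prod_{i=1}^{k}\sum_{(j^i_1,\ldots,j^i_r)\in\mathbf B}(\cdots)$, and the summations over the Stirling and binomial indices together with the common prefactor $\frac{1}{D^s(r-1)!}$ can be collected outside the product, yielding the formula asserted in the proposition.

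I do not expect any substantive obstacle: the argument is purely a formal manipulation of finite sums, and convergence on $Re(s)>r$ is inherited from each $\za(s,w_i)$, with the identity holding as an identity of meromorphic functions after the continuation furnished by Lemma \ref{lem22}. The only care required is in keeping track of the $k$ independent systems of dummy variables $(j^i_1,\ldots,j^i_r)$ and matching them correctly with the $w_i$'s inside both the power and the argument of the Hurwitz zeta.
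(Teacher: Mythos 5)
Your strategy---expand each factor $\za(s,w_i)$ by Lemma \ref{lem22} with fresh dummy variables and multiply the $k$ expressions---is exactly the route the paper takes (its entire proof is the one-line remark that the proposition follows from \eqref{24} and Lemma \ref{lem22}). However, your final step hides a genuine gap: the claim that ``the summations over the Stirling and binomial indices together with the common prefactor $\frac{1}{D^s(r-1)!}$ can be collected outside the product'' is not a valid manipulation. In Lemma \ref{lem22} the Stirling index and the binomial index are \emph{internal} to each factor: applying the lemma to $\za(s,w_i)$ produces its own pair $(m_i,\ell_i)$ with $0\leq m_i\leq r-1$, $0\leq \ell_i\leq m_i$, its own weight $\stir{r}{m_i+1}(-1)^{\ell_i}\binom{m_i}{\ell_i}$, and its own Hurwitz shift $s-m_i+\ell_i$. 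Multiplying the $k$ expansions therefore yields a sum over all $k$-tuples $\bigl((m_1,\ell_1),\ldots,(m_k,\ell_k)\bigr)$ with prefactor $\prod_{i=1}^{k}\frac{1}{D^{s}(r-1)!}=\frac{1}{D^{ks}((r-1)!)^{k}}$; that is, what your computation actually establishes is
\begin{align*}
& \zak(s,w_1,\ldots,w_k)=\frac{1}{D^{ks}((r-1)!)^{k}}\prod_{i=1}^{k}\Biggl(\sum_{(j^i_1,\ldots,j^i_r)\in\mathbf B}\;\sum_{m_i=0}^{r-1}\stir{r}{m_i+1}\sum_{\ell_i=0}^{m_i}(-1)^{\ell_i}\binom{m_i}{\ell_i}\times \\
& \qquad\qquad \times\left(\frac{a_1j^i_1+\cdots+a_rj^i_r+w_i}{D}\right)^{\ell_i}
\zeta\Bigl(s-m_i+\ell_i,\frac{\mathfrak r(j^i_1,\ldots,j^i_r)+w_i}{D}\Bigr)\Biggr).
\end{align*}
The right-hand side of the proposition is \emph{not} this expression: it keeps a single shared pair $(m,\ell)$ (i.e.\ only the diagonal terms $m_1=\cdots=m_k$, $\ell_1=\cdots=\ell_k$, with the weight $\stir{r}{m+1}\binom{m}{\ell}$ appearing once instead of $k$ times) and the prefactor $\frac{1}{D^{s}(r-1)!}$ instead of $\frac{1}{D^{ks}((r-1)!)^{k}}$. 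No rearrangement of finite sums merges $k$ independent index pairs into one.

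The discrepancy is already visible in the simplest case $r=1$, $k=2$ (reading $\mathbf B$ with the convention $0\leq j_1\leq \frac{D}{a_1}-1$ of \eqref{21}): there $D=a_1$ and Lemma \ref{lem22} reduces to $\za(s,w_i)=a_1^{-s}\zeta\bigl(s,\frac{w_i}{a_1}\bigr)$, so $\zak(s,w_1,w_2)=a_1^{-2s}\zeta\bigl(s,\frac{w_1}{a_1}\bigr)\zeta\bigl(s,\frac{w_2}{a_1}\bigr)$, whereas the proposition's right-hand side equals $a_1^{-s}\zeta\bigl(s,\frac{w_1}{a_1}\bigr)\zeta\bigl(s,\frac{w_2}{a_1}\bigr)$---off by the factor $a_1^{-s}$; note the paper's own remark after Proposition \ref{p1} records the prefactor $a_1^{-ks}$, consistent with the product computation and not with the proposition. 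So the printed formula cannot be reached by the product argument at all. The paper's one-line proof shares this defect, but a sound write-up must either prove the displayed identity above (and amend the statement accordingly) or supply a genuinely different derivation of the printed formula; your proposal does neither, since it asserts the invalid ``collection'' step as if it were routine bookkeeping.
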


\begin{prop}\label{p3}
$\pak(n)$ is a quasi-polynomial of degree $rk-1$, with the period $D$, i.e.
$$\pak(n)=d_{k,\mathbf a,rk-1}(n)n^{rk-1} + \cdots + d_{k,\mathbf a,1}(n)n + d_{k,\mathbf a,0}(n),$$
where $d_{k,\mathbf a,m}(n+D)=d_{k,\mathbf a,m}(n)$ for $0\leq m\leq rk-1$ and $n\geq 0$, and $d_{k,\mathbf a,rk-1}(n)$
is not identically zero.
\end{prop}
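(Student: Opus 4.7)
The plan is to read off the quasi-polynomial structure directly from the generating function \eqref{genk}, which is a rational function whose poles lie at $D$-th roots of unity. Since every $a_i$ divides $D$, each factor $1-z^{a_i}$ splits into linear factors of the form $1-\xi^{-1}z$ with $\xi^D=1$, so the pole of $\prod_{i=1}^r(1-z^{a_i})^{-k}$ at a given $D$-th root of unity $\xi$ has order
$$m_\xi \;:=\; k\cdot\#\{i\;:\;\xi^{a_i}=1\} \;\leq\; rk,$$
with equality $m_1=rk$ at $\xi=1$.

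Next I would perform a partial fraction decomposition
$$\prod_{i=1}^r\frac{1}{(1-z^{a_i})^k} \;=\; \sum_{\xi^D=1}\sum_{j=1}^{m_\xi}\frac{c_{\xi,j}}{(1-\xi^{-1}z)^j}$$
for suitable constants $c_{\xi,j}\in\mathbb C$, expand each summand via the identity $(1-\xi^{-1}z)^{-j}=\sum_{n\geq 0}\binom{n+j-1}{j-1}\xi^{-n}z^n$, and match coefficients of $z^n$ in \eqref{genk} to obtain
$$\pak(n) \;=\; \sum_{\xi^D=1}\sum_{j=1}^{m_\xi} c_{\xi,j}\binom{n+j-1}{j-1}\xi^{-n}.$$
For fixed $\xi$, the sequence $n\mapsto \xi^{-n}$ is periodic of period dividing $D$, and $\binom{n+j-1}{j-1}$ is a polynomial in $n$ of degree $j-1\leq rk-1$. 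Regrouping by powers of $n$ then produces the quasi-polynomial form stated in the proposition, with $D$-periodic coefficients $d_{k,\mathbf a,m}(n)$.

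The remaining task is to verify that $d_{k,\mathbf a,rk-1}(n)$ is not identically zero. I would isolate the $\xi=1$, $j=rk$ contribution by using $1-z^{a_i}\sim a_i(1-z)$ near $z=1$ to compute $c_{1,rk}=(a_1\cdots a_r)^{-k}>0$. Any other root of unity $\xi\neq 1$ with $m_\xi=rk$ (which can occur when $\gcd(a_1,\ldots,a_r)>1$) contributes a pure oscillation proportional to $\xi^{-n}$ whose average over $n=0,\ldots,D-1$ vanishes. Thus the mean of $d_{k,\mathbf a,rk-1}(n)$ over one period equals $\frac{1}{(rk-1)!(a_1\cdots a_r)^k}\neq 0$, so $d_{k,\mathbf a,rk-1}$ cannot be identically zero.

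The main obstacle is essentially bookkeeping: ensuring that all periodic components share a common period dividing $D$ and cleanly separating the leading polynomial contribution from the lower-order oscillatory terms. No subtle analysis is needed, as the generating function is rational with explicitly located poles and known multiplicities.
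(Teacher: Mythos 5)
Your proof is correct, but it takes a genuinely different route from the paper's. The paper's proof is a one-line reduction: since $\pak(n)=p_{\mathbf a[k]}(n)$, where $\mathbf a[k]$ is the sequence in which each $a_i$ is repeated $k$ times (so it has $rk$ terms and the same least common multiple $D$), the statement follows immediately from Bell's classical theorem \cite{bell} on the quasi-polynomiality of the restricted partition function. What you have done, in effect, is reprove Bell's theorem for this particular sequence: you locate the poles of the generating function \eqref{genk} at $D$-th roots of unity with multiplicities $m_\xi=k\cdot\#\{i:\xi^{a_i}=1\}$, perform the partial fraction expansion, and regroup by powers of $n$. Your route buys more than the paper's: it is self-contained, it yields an explicit expression for each coefficient function $d_{k,\mathbf a,m}(n)$ as a finite linear combination of the $D$-periodic characters $\xi^{-n}$, and your averaging argument gives the precise mean value $\frac{1}{(rk-1)!\,(a_1\cdots a_r)^k}$ of the leading coefficient over one period --- a sharper statement than mere nonvanishing, and one consistent with the leading ($u=0$) term of the polynomial part in Theorem \ref{teo5}. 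Your handling of the case $\gcd(a_1,\ldots,a_r)>1$, where roots of unity $\xi\neq 1$ also produce poles of maximal order $rk$, is the one point where a careless argument could go wrong, and you treat it correctly. The paper's route, by contrast, buys brevity and fits its overall strategy of deducing every property of $\pak$ by specializing known results on $\paa$ to the sequence $\mathbf a[k]$.
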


\begin{proof}
Since $\pak(n)=p_{\mathbf a[k]}(n)$, where $\mathbf a[k]=(a_1^{[k]},\ldots,a_r^{[k]})$ (see \eqref{ak}), the 
conclusion follows from the classical result of Bell \cite{bell}.
\end{proof}

\begin{cor}\label{26}
We have that
$$\zak(s,w_1,\ldots,w_k) = \sum_{n=0}^{\infty} \sum_{m=0}^{rk-1} d_{k,\mathbf a,m}(n) \sum_{\substack{n_1+\cdots+n_k=n \\ \ell_1+\cdots+\ell_k = m}} \binom{m}{\ell_1,\ldots,\ell_k} \prod_{j=1}^k \frac{1}{(n_j+w_j)^{s-\ell_j}(1+\frac{w_j}{n_j})^{\ell_j}}.$$
\end{cor}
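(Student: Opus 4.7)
The plan is to combine Proposition \ref{p1} with the quasi-polynomial decomposition of $\pak(n)$ obtained in Proposition \ref{p3}, and then repackage the factor $n^m$ using the multinomial theorem. More precisely, I would start from
$$\zak(s,w_1,\ldots,w_k)=\sum_{n=0}^{\infty}\sum_{n_1+\cdots+n_k=n}\frac{\pak(n)}{(n_1+w_1)^s\cdots(n_k+w_k)^s},$$
which is Proposition \ref{p1}, and substitute the expansion $\pak(n)=\sum_{m=0}^{rk-1}d_{k,\mathbf a,m}(n)n^m$ given by Proposition \ref{p3}.

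The next step is purely algebraic: since the inner sum runs over compositions with $n_1+\cdots+n_k=n$, I would apply the multinomial theorem to write
$$n^m=(n_1+\cdots+n_k)^m=\sum_{\ell_1+\cdots+\ell_k=m}\binom{m}{\ell_1,\ldots,\ell_k}n_1^{\ell_1}\cdots n_k^{\ell_k}.$$
Inserting this into the expression from Proposition \ref{p1} and interchanging the finite sums over $m$ and $(\ell_1,\ldots,\ell_k)$ with the absolutely convergent outer sums (valid for $\operatorname{Re}(s)$ large enough, which is the regime in which $\zak$ is defined as a series), I would arrive at a double series whose generic summand contains the factor
$$\prod_{j=1}^{k}\frac{n_j^{\ell_j}}{(n_j+w_j)^s}.$$

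The final, decisive step is simply to rewrite each factor as
$$\frac{n_j^{\ell_j}}{(n_j+w_j)^s}=\frac{1}{(n_j+w_j)^{s-\ell_j}}\cdot\frac{n_j^{\ell_j}}{(n_j+w_j)^{\ell_j}}=\frac{1}{(n_j+w_j)^{s-\ell_j}(1+\frac{w_j}{n_j})^{\ell_j}},$$
matching the stated form of the corollary. (When $n_j=0$ and $\ell_j>0$ the original factor $n_j^{\ell_j}/(n_j+w_j)^s$ is just $0$, so the apparent singularity $1/n_j$ in the rewritten expression is a harmless notational issue that disappears under the convention $0^{\ell_j}=0$.)

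There is no real obstacle here; the only subtlety worth flagging in the write-up is the legitimacy of interchanging the sums, which follows from the absolute convergence of the Dirichlet-type series defining $\zak(s,w_1,\ldots,w_k)$ for $\operatorname{Re}(s)>r$, combined with the fact that the $m$-sum and the $(\ell_1,\ldots,\ell_k)$-sum are finite. After that interchange, the identity is a one-line rearrangement based on the multinomial theorem.
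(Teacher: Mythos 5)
Your proof is correct and takes essentially the same route as the paper's own proof: invoke Proposition \ref{p1}, substitute the quasi-polynomial expansion of $\pak(n)$ from Proposition \ref{p3} writing $n^m=(n_1+\cdots+n_k)^m$, expand by the multinomial theorem, and rewrite $n_j^{\ell_j}/(n_j+w_j)^s$ as $1/\bigl((n_j+w_j)^{s-\ell_j}(1+\tfrac{w_j}{n_j})^{\ell_j}\bigr)$. Your added remarks on the interchange of summation and on the $n_j=0$ convention are details the paper leaves implicit ("the conclusion follows immediately"), and they only strengthen the write-up.
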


\begin{proof}
From Proposition \ref{p1} we have that 
$$  \zak(s,w_1,\ldots,w_k) = \sum_{n=0}^{\infty} \sum_{n_1+\cdots+n_k=n} \frac{\pak(n)}{(n_1+w_1)^s\cdots (n_k+w_k)^s}.$$
Therefore, from Proposition \ref{p3} it follows that
\begin{align*}
& \zak(s,w_1,\ldots,w_k) = \sum_{n=0}^{\infty} \sum_{m=0}^{rk-1} \sum_{n_1+\cdots+n_k=n} \frac{d_{k,\mathbf a,m}(n)(n_1+\cdots+n_k)^{m}}{(n_1+w_1)^s\cdots (n_k+w_k)^s} = \\
& = \sum_{n=0}^{\infty} \sum_{m=0}^{rk-1} d_{k,\mathbf a,m}(n) \sum_{\substack{n_1+\cdots+n_k=n \\ \ell_1+\cdots+\ell_k = m}}
    \binom{m}{\ell_1,\ldots,\ell_k}\frac{n_1^{\ell_1}\cdots n_{k}^{\ell_k}}{(n_1+w_1)^s\cdots (n_k+w_k)^s}.
\end{align*}
The conclusion follows immediately.
\end{proof}


We fix two integers $N\geq 1$ and we consider the numbers
\begin{equation}\label{fsn1}
f_{N,\ell}=\#\{(i_1,\ldots,i_k)\;:\;i_1+\cdots+i_k=\ell,\;0\leq i_t\leq N-1\}\text{ where }0\leq\ell\leq k(N-1).
\end{equation}
It is clear that $f_{N,\ell}$ is the coefficient of $t^{\ell}$ of the polynomial
\begin{equation}\label{fsn2}
f_{N}(t)=(1+t+\cdots+t^{N-1})^k.
\end{equation}
Using the binomial expansion, we have
\begin{equation}\label{fsn3}
f_{N}(t)=(1-t^N)^k(1-t)^{-k}=\sum_{i=0}^k(-1)^i\binom{k}{i}t^{iN} \sum_{j=0}^{\infty}\binom{j+k-1}{j}t^j.
\end{equation}

\begin{prop}\label{snl}
With the above notations, we have that
$$f_{N,\ell}=\sum_{i,j\geq 0,\;iN+j=\ell}(-1)^i\binom{k}{i}\binom{j+k-1}{j}.$$
\end{prop}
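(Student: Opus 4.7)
The plan is to read off the coefficient of $t^{\ell}$ from both sides of the identity in \eqref{fsn3}. By the definition \eqref{fsn1}--\eqref{fsn2}, the coefficient of $t^{\ell}$ on the left-hand side is $f_{N,\ell}$ itself. For the right-hand side, I would expand the two factors separately: $(1-t^N)^k = \sum_{i=0}^{k} (-1)^i \binom{k}{i} t^{iN}$ by the ordinary binomial theorem, and $(1-t)^{-k} = \sum_{j \geq 0} \binom{j+k-1}{j} t^{j}$ by the negative binomial series (equivalently, the generating function \eqref{gen} in the special case $\mathbf a = (1,\ldots,1)$).

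Then I would form the Cauchy product and collect all pairs $(i,j)$ with $iN + j = \ell$: the coefficient of $t^{\ell}$ in the product series is
$$\sum_{iN+j=\ell} (-1)^i \binom{k}{i} \binom{j+k-1}{j},$$
which matches the right-hand side of the claimed formula. Equating coefficients completes the proof.

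The only routine remark needed is that the sum is implicitly finite, since $\binom{k}{i}=0$ for $i>k$ (and in fact $i\leq \lfloor \ell/N\rfloor$ is forced by $j\geq 0$). There is essentially no obstacle here; the statement is an immediate reading-off of coefficients once \eqref{fsn3} is written in the form of a product of two standard power series.
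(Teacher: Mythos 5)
Your proof is correct and is exactly the argument the paper intends: the paper's own proof of Proposition \ref{snl} just cites \eqref{fsn1}, \eqref{fsn2} and \eqref{fsn3}, and your write-up merely makes explicit the coefficient extraction from the product of the two binomial series in \eqref{fsn3}. No difference in substance.
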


\begin{proof}
The conclusion follows from \eqref{fsn1}, \eqref{fsn2} and \eqref{fsn3}.
\end{proof}

\section{Main results}

We use the notations from the previous section.

\begin{teor}\label{teo1}
For $n\geq 0$ we have that
\begin{align*}
& d_{k,\mathbf a,m}(n) = \frac{1}{(rk-1)!} \sum_{\substack{(\ell_1,\ldots,\ell_r)\in\mathbf C \\ a_1\ell_1+\cdots+a_r\ell_r \equiv n(\bmod\;D)}} 
\prod_{s=1}^r \sum_{i_s,j_s\geq 0,\;i_s\frac{D}{a_s}+j_s=\ell_s}(-1)^{i_s}\binom{k}{i_s}\binom{j_s+k-1}{j_s} \times
 \\
& \times \sum_{t=m}^{rk-1} \stir{rk}{t+1} (-1)^{t-m} \binom{k}{m} D^{-t}(a_1\ell_1+\cdots+a_r\ell_r)^{t-m},
\end{align*}
where $\mathbf C=\{(\ell_1,\ldots,\ell_r)\;:\;0\leq \ell_1 \leq k(\frac{D}{a_1}-1),\ldots,0\leq \ell_r\leq k(\frac{D}{a_r}-1)\}$.
\end{teor}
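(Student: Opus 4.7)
My approach is to work directly with the generating function \eqref{genk}, extract the coefficient of $z^n$, and then expand it as a polynomial in $n$.

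For each $s=1,\ldots,r$, I would write
$$\frac{1}{(1-z^{a_s})^k} = \frac{1}{(1-z^D)^k}\left(\frac{1-z^D}{1-z^{a_s}}\right)^k.$$
Since $\frac{1-z^D}{1-z^{a_s}} = 1+z^{a_s}+\cdots+z^{(D/a_s-1)a_s}$, substituting $t=z^{a_s}$ and $N=D/a_s$ into $f_N(t)$ from \eqref{fsn2} gives
$$\left(\frac{1-z^D}{1-z^{a_s}}\right)^k = \sum_{\ell_s=0}^{k(D/a_s-1)} f_{D/a_s,\ell_s}\, z^{a_s\ell_s}.$$
Taking the product over $s$ and using $\frac{1}{(1-z^D)^{rk}} = \sum_{u\geq 0}\binom{u+rk-1}{rk-1}z^{uD}$, the generating function \eqref{genk} becomes
$$\sum_{n\geq 0}\pak(n)z^n = \sum_{(\ell_1,\ldots,\ell_r)\in\mathbf C}\prod_{s=1}^r f_{D/a_s,\ell_s}\sum_{u\geq 0}\binom{u+rk-1}{rk-1}z^{uD+a_1\ell_1+\cdots+a_r\ell_r}.$$

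Setting $A:=a_1\ell_1+\cdots+a_r\ell_r$, extracting the coefficient of $z^n$ keeps only the tuples with $A\equiv n\;(\bmod\;D)$, with corresponding $u=(n-A)/D$. Applying \eqref{22} with $r$ replaced by $rk$ yields
$$\binom{\tfrac{n-A}{D}+rk-1}{rk-1} = \frac{1}{(rk-1)!}\sum_{t=0}^{rk-1}\stir{rk}{t+1}D^{-t}(n-A)^t,$$
and the binomial theorem gives $(n-A)^t=\sum_{m=0}^{t}\binom{t}{m}(-1)^{t-m}A^{t-m}n^m$. Interchanging summations to collect the coefficient of $n^m$, and then invoking Proposition \ref{snl} to rewrite each $f_{D/a_s,\ell_s}$ as $\sum_{i_s,j_s\geq 0,\;i_s(D/a_s)+j_s=\ell_s}(-1)^{i_s}\binom{k}{i_s}\binom{j_s+k-1}{j_s}$, produces the claimed formula. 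The periodicity $d_{k,\mathbf a,m}(n+D)=d_{k,\mathbf a,m}(n)$ is automatic because $n$ appears in the resulting expression only through the congruence $A\equiv n\;(\bmod\;D)$.

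\textbf{Main obstacle.} None of the individual steps is deep; the real work is bookkeeping across three nested summations (the tuple $(\ell_1,\ldots,\ell_r)\in\mathbf C$, the Stirling index $t$, and the binomial index $m$) and carefully matching the final reindexed expression against the stated formula, in particular verifying that the Stirling-times-binomial factor $\stir{rk}{t+1}\binom{t}{m}(-1)^{t-m}D^{-t}A^{t-m}$ falling out of the calculation agrees with the combination written in the statement.
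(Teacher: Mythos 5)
Your derivation is sound, and it takes a genuinely different route from the paper. The paper never touches the generating function in this proof: it invokes \cite[Theorem 2.8]{lucrare} for the restricted partition function of the extended sequence $\mathbf a[k]$ (which has $rk$ terms), obtaining a sum over $rk$-tuples $(j_1,\ldots,j_{rk})\in\mathbf{B[k]}$, and then collapses each block of $k$ indices with fixed sum $\ell_s$ into the count $f_{\frac{D}{a_s},\ell_s}$, rewritten via Proposition \ref{snl}. You instead re-derive the quasi-polynomial coefficients from scratch out of \eqref{genk}, through the factorization $\frac{1}{(1-z^{a_s})^k}=\frac{1}{(1-z^D)^k}\,f_{D/a_s}(z^{a_s})$, coefficient extraction, and the rising-factorial expansion \eqref{22}; Proposition \ref{snl} enters in the same role but on the generating-function side. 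The paper's argument is shorter because the heavy lifting (exactly your coefficient extraction and Stirling expansion, done for a general sequence) is what it cites from \cite{lucrare}; yours is self-contained and makes the provenance of the Stirling numbers visible. The only step you leave implicit is that collecting the coefficient of $n^m$ really identifies $d_{k,\mathbf a,m}(n)$, i.e.\ uniqueness of the coefficient functions of a quasi-polynomial of period $D$; this is standard (restrict to each residue class mod $D$) but should be said.

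One substantive point in your favor: the ``matching'' you flag as the main obstacle genuinely fails against the statement as printed, and the discrepancy is a typo in the paper, not an error in your computation. Your calculation produces the factor $\stir{rk}{t+1}\binom{t}{m}(-1)^{t-m}D^{-t}(a_1\ell_1+\cdots+a_r\ell_r)^{t-m}$, whereas the statement (and the paper's own display \eqref{kuku}) has $\binom{k}{m}$ in place of $\binom{t}{m}$. The binomial coefficient arises from expanding $(n-A)^t$ with $A=a_1\ell_1+\cdots+a_r\ell_r$, so it must be $\binom{t}{m}$, which is also what \cite[Theorem 2.8]{lucrare} gives. A one-line check: for $r=1$, $a_1=1$, $k=2$ one has $\pak(n)=n+1$, $D=1$, $\mathbf C=\{0\}$, and the printed formula yields $d_{k,\mathbf a,1}(n)=\stir{2}{2}\binom{2}{1}=2$, while the factor $\binom{t}{m}=\binom{1}{1}=1$ gives the correct leading coefficient $1$. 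So your version of the formula is the correct one, and the statement should be emended accordingly.
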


\begin{proof}
We consider the set
\begin{align*}
& \mathbf{B[k]}:=\{(j_1,\ldots,j_{rk})\;:\;0\leq j_1\leq \frac{D}{a_1}-1,\ldots, 0\leq j_k\leq \frac{D}{a_1}-1,\ldots \\
& \ldots,0\leq j_{rk-k+1}\leq \frac{D}{a_r}-1,\ldots 0\leq j_{rk}\leq \frac{D}{a_{r}}-1\}.
\end{align*}
According to \cite[Theorem 2.8]{lucrare} and Proposition \ref{p3} we have that
$$d_{k,\mathbf a,m} = \frac{1}{(rk-1)!} \sum_{\substack{(j_1,\ldots,j_{rk}) \in \mathbf{B[k]} \\ 
  a_1(j_1+\cdots+j_k)+\cdots+a_r(j_{rk-k+1}+\cdots+j_{rk}) \equiv n(\bmod\;D)}} \sum_{t=m}^{rk-1} \stir{rk}{t+1} (-1)^{t-m} \binom{k}{m} \times $$	
\begin{equation}\label{kuku}
\times D^{-t} (a_1(j_1+\cdots+j_k)+\cdots+a_r(j_{rk-k+1}+\cdots+j_{rk}))^{t-m}.
\end{equation}
We let $\ell_1:=j_1+\cdots+j_k$, $\ell_2:=j_{k+1}+\cdots+j_{2k},\ldots,\ell_r=j_{rk-k+1}+\cdots+j_{rk}$. It is clear
that $(j_1,\ldots,j_{rk})\in\mathbf{B[k]}$ implies $(\ell_1,\ldots,\ell_r)\in\mathbf C$. Since 
$$f_{\frac{D}{a_s},\ell_s}=\#\{(j_{ks-k+1},\ldots,j_{ks})\;:\;j_{ks-k+1}+\cdots+j_{ks}=\ell_s,\;0\leq j_t\leq \frac{D}{a_s}\text{ for }ks-k+1\leq t\leq ks\},$$
the conclusion follows from \eqref{kuku} and Proposition \ref{snl}.
\end{proof}

\begin{teor}\label{teo2}
For $n\geq 0$ we have that
\begin{align*}
& \pak(n) = \frac{1}{(rk-1)!} \sum_{\substack{(\ell_1,\ldots,\ell_r)\in\mathbf C \\ a_1\ell_1+\cdots+a_r\ell_r \equiv n(\bmod\;D)}} 
\prod_{s=1}^r \sum_{i_s,j_s\geq 0,\;i_s\frac{D}{a_s}+j_s=\ell_s}(-1)^{i_s}\binom{k}{i_s}\binom{j_s+k-1}{j_s} \times \\
& \times \prod_{t=1}^{rk-1}\left( \frac{n-a_1\ell_1-\cdots-a_r\ell_r}{D} + t \right),
\end{align*}
where $\mathbf C=\{(\ell_1,\ldots,\ell_r)\;:\;0\leq \ell_1 \leq k(\frac{D}{a_1}-1),\ldots,0\leq \ell_r\leq k(\frac{D}{a_r}-1)\}$.
\end{teor}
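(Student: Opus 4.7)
The plan is to deduce Theorem \ref{teo2} from the quasi-polynomial expansion
$$\pak(n)=\sum_{m=0}^{rk-1} d_{k,\mathbf a,m}(n)\,n^m$$
granted by Proposition \ref{p3}, by substituting the explicit expression for $d_{k,\mathbf a,m}(n)$ proved in Theorem \ref{teo1} and then collapsing the resulting double sum into a product. The congruence condition on $(\ell_1,\ldots,\ell_r)$ and the inner product $\prod_{s=1}^r(\cdots)$ over $i_s,j_s$ do not depend on $m$, so they can be pulled outside and kept fixed throughout the manipulation.

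After the substitution, fix an admissible tuple $(\ell_1,\ldots,\ell_r)\in\mathbf C$ and abbreviate $u:=a_1\ell_1+\cdots+a_r\ell_r$. Interchanging the summations over $m$ and $t$, the inner expression becomes
$$\sum_{t=0}^{rk-1}\stir{rk}{t+1}D^{-t}\sum_{m=0}^{t}\binom{t}{m}n^m(-u)^{t-m}.$$
The inner sum over $m$ is exactly the binomial expansion of $(n-u)^t$. Thus everything reduces to
$$\sum_{t=0}^{rk-1}\stir{rk}{t+1}\left(\frac{n-u}{D}\right)^{t}.$$

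At this point I would invoke the Stirling number identity recorded in \eqref{22}, namely
$$\sum_{t=0}^{r'-1}\stir{r'}{t+1}x^{t}=x^{(r')}=\prod_{t=1}^{r'-1}(x+t),$$
applied with $r'=rk$ and $x=(n-u)/D$. This converts the inner polynomial into the desired product $\prod_{t=1}^{rk-1}\!\left(\frac{n-a_1\ell_1-\cdots-a_r\ell_r}{D}+t\right)$, and reassembling with the factor $1/(rk-1)!$, the product $\prod_s(\cdots)$, and the outer sum over $(\ell_1,\ldots,\ell_r)$ subject to $a_1\ell_1+\cdots+a_r\ell_r\equiv n\pmod D$ yields the formula in Theorem \ref{teo2}.

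The proof is essentially algebraic bookkeeping; the main step that deserves care is the interchange of the sums indexed by $m$ and $t$ together with the recognition of the binomial coefficient $\binom{t}{m}$ so that the collapse to $(n-u)^t$ is clean (this is where the statement of Theorem \ref{teo1} is used in its correctly-indexed form). Once this is granted, the Stirling identity \eqref{22} is immediate, so I do not expect any genuine obstacle beyond careful index-tracking.
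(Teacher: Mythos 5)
Your proof is correct, but it takes a genuinely different route from the paper's. The paper proves Theorem \ref{teo2} by the same template as Theorem \ref{teo1}: write $\pak(n)=p_{\mathbf a[k]}(n)$, apply \cite[Corollary 2.10]{lucrare} to the sequence $\mathbf a[k]$ (which gives a sum over tuples $(j_1,\ldots,j_{rk})\in\mathbf{B[k]}$ of the product $\prod_{t=1}^{rk-1}\bigl(\frac{n-a_1(j_1+\cdots+j_k)-\cdots}{D}+t\bigr)$), and then regroup the $rk$ indices into the $r$ sums $\ell_s$, using Proposition \ref{snl} to count the multiplicity $f_{D/a_s,\ell_s}$ of each value of $\ell_s$. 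You instead treat Theorem \ref{teo2} as a corollary of results already proved inside the paper: Proposition \ref{p3} plus Theorem \ref{teo1}, collapsed via the binomial theorem and the Stirling identity \eqref{22}. Your interchange of the $m$ and $t$ sums, the recognition of $\sum_{m=0}^{t}\binom{t}{m}n^m(-u)^{t-m}=(n-u)^t$, and the final application of $\sum_{t=0}^{rk-1}\stir{rk}{t+1}x^t=\prod_{t=1}^{rk-1}(x+t)$ are all sound; in effect you reproduce internally the passage from \cite[Theorem 2.8]{lucrare} to \cite[Corollary 2.10]{lucrare}, so no new external input is needed beyond Theorem \ref{teo1}. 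One caveat deserves emphasis: your collapse works only if Theorem \ref{teo1} carries the binomial coefficient $\binom{t}{m}$, whereas the paper's statement (and its display \eqref{kuku}) prints $\binom{k}{m}$; with $\binom{k}{m}$ taken literally the inner sum is not a binomial expansion and the argument fails. You flag this by saying you use Theorem \ref{teo1} ``in its correctly-indexed form,'' and indeed $\binom{t}{m}$ is what the derivation from \cite[Theorem 2.8]{lucrare} produces, so this is evidence of a typo in the paper rather than a gap in your argument --- but a referee would want that correction made explicit, since as printed Theorem \ref{teo1} does not literally imply Theorem \ref{teo2} by your route. The trade-off between the two proofs: the paper's is uniform across Theorems \ref{teo1}--\ref{teo4} and independent of any possible misprint in Theorem \ref{teo1}, while yours is more self-contained and has the side benefit of cross-validating (and correcting) the indexing of Theorem \ref{teo1}.
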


\begin{proof}
The proof is similar to the proof of Theorem \ref{teo1}, using \cite[Corollary 2.10]{lucrare}.
\end{proof}

The \emph{Bernoulli polynomials} are defined by
$$B_n(x)=\sum_{k=0}^n\binom{n}{k}B_{n-k}x^k.$$
For $\mathbf a=(a_1,\ldots,a_r)$, the Bernoulli-Barnes numbers (see \cite{barnes}) are 
\begin{equation}\label{bja}
B_j(\mathbf a)=\sum_{i_1+\cdots+i_r=j} \binom{j}{i_1,\ldots,i_r}B_{i_1}\cdots B_{i_r}a_1^{i_1}\cdots a_r^{i_r}.
\end{equation}
From \eqref{ak} and \eqref{bja} it follows that
$$B_j(\mathbf a[k])=\sum_{i_1+\cdots+i_{rk}=j}\binom{j}{i_1,\ldots,i_{rk}}B_{i_1}\cdots B_{i_{rk}}a_1^{i_1+\cdots+i_k}
\cdots a_r^{i_{rk-k+1}+\cdots+i_{rk}} = $$ \small
\begin{equation}\label{bjak}
= \sum_{\ell_1+\cdots+\ell_r = j} \binom{j}{\ell_1,\ldots,\ell_r} a_1^{\ell_1}\cdots a_r^{\ell_r} 
  \sum_{\substack{i_1+\cdots+i_k=\ell_1 \\ \vdots \\ i_{rk-k+1}+\cdots+i_{rk}=\ell_r}} \binom{\ell_1}{i_1,\ldots,i_k}
		\cdots \binom{\ell_r}{i_{rk-k+1},\ldots,i_{rk}} B_{i_1}\cdots B_{i_{rk}}.
\end{equation} \normalsize
We consider the $rkD \times rkD$ determinant:
\begin{equation}\label{drcr}
\Delta(r,k,D):=\begin{vmatrix} 
\frac{B_1(\frac{1}{D})}{1} & \cdots & \frac{B_1(1)}{1} & \cdots & \frac{B_{rk}(\frac{1}{D})}{rk} & \cdots & \frac{B_{rk}(1)}{rk} \\
\frac{B_2(\frac{1}{D})}{2} & \cdots & \frac{B_1(1)}{1} & \cdots & \frac{B_{rk+1}(\frac{1}{D})}{rk+1} & \cdots & \frac{B_{rk+1}(1)}{rk+1}  \\
\vdots & \vdots & \vdots & \vdots & \vdots & \vdots & \vdots \\
\frac{B_{rkD}(\frac{1}{D})}{rkD} & \cdots & \frac{B_{rkD}(1)}{rkD} & \cdots & \frac{B_{rkD+rk-1}(\frac{1}{D})}{rkD+rk-1} & \cdots & 
\frac{B_{rkD+rk-1}(1)}{rkD+rk-1} \end{vmatrix}.
\end{equation}

\begin{prop}\label{dety}
If $\Delta(r,k,D)\neq 0$ then $\pak(n)$ can be expressed in terms of $B_j\left( \frac{v}{D} \right)$
where $1\leq v\leq D$ and $1\leq j\leq rkD+rk-1$, and $B_j(\mathbf a[k])$ with $rk \leq j\leq rkD+rk-1$.
\end{prop}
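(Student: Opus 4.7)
Because $\pak(n)=p_{\mathbf a[k]}(n)$ and $\lcm(\mathbf a[k])=D$, my plan is to apply the analogous inversion result of \cite{lucrare} to the length-$rk$ sequence $\mathbf a[k]$, and then to identify the determinant that appears with $\Delta(r,k,D)$ from \eqref{drcr}.

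By Proposition \ref{p3}, $\pak(n)$ is a quasi-polynomial of period $D$ and degree $rk-1$, so it is completely determined by its $rk\cdot D$ coefficient values $d_{k,\mathbf a,m}(v)$ for $0\le m\le rk-1$ and $0\le v\le D-1$. The strategy is to produce, for each $i=1,\ldots,rkD$, one linear equation in these unknowns by computing the weighted moment sum $\sum_{n=0}^{DN-1}n^{i-1}\pak(n)$ in two different ways. Splitting this sum over the $D$ residue classes modulo $D$ and applying Faulhaber's identity
$$\sum_{t=0}^{N-1}(v+tD)^{m+i-1}=\frac{D^{m+i-1}}{m+i}\bigl(B_{m+i}(N+v/D)-B_{m+i}(v/D)\bigr)$$
exhibits the sum as a polynomial in $N$ whose $N$-independent part is a linear combination of the quantities $B_{i+m}(v/D)/(i+m)\cdot d_{k,\mathbf a,m}(v)$. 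On the other hand, the Laurent expansion of the generating function $\prod_{i=1}^r(1-e^{-a_is})^{-k}$ at $s=0$ --- together with the definition \eqref{bja}--\eqref{bjak} of the Bernoulli--Barnes numbers for $\mathbf a[k]$ --- computes the same $N$-independent part as a $\mathbb Q$-linear combination of values $B_j(\mathbf a[k])$ with $rk\le j\le rkD+rk-1$.

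Matching the two computations for $i=1,\ldots,rkD$ yields a linear system of the form
$$\sum_{\substack{0\le m\le rk-1\\ 1\le v\le D}}\frac{B_{i+m}(v/D)}{i+m}\,d_{k,\mathbf a,m}(v)=R_i,$$
where each right-hand side $R_i$ lies in the $\mathbb Q$-span of $\{B_j(\mathbf a[k])\;:\;rk\le j\le rkD+rk-1\}$. By the very definition \eqref{drcr}, the coefficient matrix of this system is exactly $\Delta(r,k,D)$, so under the hypothesis $\Delta(r,k,D)\neq 0$ Cramer's rule expresses every $d_{k,\mathbf a,m}(v)$ as a rational function in the prescribed data: the values $B_j(v/D)$ with $1\le v\le D$ and $1\le j\le rkD+rk-1$, and the values $B_j(\mathbf a[k])$ with $rk\le j\le rkD+rk-1$. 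Substituting back into the quasi-polynomial expression of Proposition \ref{p3} gives the claimed formula for $\pak(n)$.

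The main obstacle is the bookkeeping in the construction of the $rkD$ equations: one must verify that exactly the entry $B_{i+m}(v/D)/(i+m)$ appears at position $(i,(m,v))$ of the system (without lower-order corrections mixing different residues $v$), and that every $\mathbf a[k]$-dependence on the right-hand side lives in the Bernoulli--Barnes range $rk\le j\le rkD+rk-1$. This is where applying the argument of \cite{lucrare} verbatim, with the role of $\mathbf a$ played by $\mathbf a[k]$ and the length $r$ replaced by $rk$, should streamline the derivation.
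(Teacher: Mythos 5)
Your linear-algebra endgame is exactly the paper's: set up an $rkD\times rkD$ system in the unknowns $d_{k,\mathbf a,m}(v)$, $0\le m\le rk-1$, $1\le v\le D$, recognize its coefficient matrix as $\Delta(r,k,D)$ from \eqref{drcr}, solve under the hypothesis $\Delta(r,k,D)\neq 0$, and substitute into the quasi-polynomial expression of Proposition \ref{p3}. The difference lies in where the system comes from. The paper does not derive it at all: it applies the known identity \cite[(1.8)]{medi} to the sequence $\mathbf a[k]$ (displayed as \eqref{crux}), which for $n=0,1,\ldots,rkD-1$ immediately yields the equations, with right-hand sides $\frac{(-1)^{rk}n!}{(n+rk)!}B_{rk+n}(\mathbf a[k])-\delta_{0n}$. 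You instead try to reconstruct this identity by computing the moments $\sum_{n=0}^{DN-1}n^{i-1}\pak(n)$ in two ways. Also note that the inversion result you want to invoke is the one in \cite{medi}, not in \cite{lucrare} as stated in your first sentence.

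The reconstruction is where the gap is. Your first computation (splitting over residue classes and applying Faulhaber) is fine and produces the correct left-hand sides, including the factors $D^{m+i-1}$ which you then silently drop in the displayed system; that is harmless for invertibility, since rescaling rows and columns by nonzero powers of $D$ changes the determinant by a nonzero factor, but as written your matrix is not literally $\Delta(r,k,D)$. The second computation, however, is only asserted: you claim the $N$-independent part of the moment sum is a $\mathbb Q$-combination of $B_j(\mathbf a[k])$ with $rk\le j\le rkD+rk-1$, and this claim is precisely the nontrivial content of \cite[(1.8)]{medi}. In \cite{medi} it is obtained from special values at negative integers of the Barnes and Hurwitz zeta functions (which is also what produces the correction term $-\delta_{0n}$, absent from your sketch). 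If one instead tries to extract the $N$-independent part from the generating function directly, as you propose, one must also control the contributions of the poles at the nontrivial $D$-th roots of unity, which contribute polynomially in $N$ along the arithmetic progression you sum over; your sketch does not address why these do not pollute the right-hand side. So either supply that analytic argument, or do what the paper does and simply cite \cite[(1.8)]{medi} for $\mathbf a[k]$; with that citation in place, the rest of your proof is correct and coincides with the paper's.
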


\begin{proof}
According to \cite[(1.8)]{medi}, we have that
\begin{equation}\label{crux}
\sum_{m=0}^{rk-1}\sum_{v=1}^{D} d_{k,\mathbf a,m}(n) D^{n+m} \frac{B_{n+m+1}\left(\frac{v}{D}\right)}{n+m+1} = 
  \frac{(-1)^{rk}n!}{(n+rk)!}B_{rk+n}(\mathbf a[k]) -\delta_{0n},
\end{equation}
where $\delta_{0n}=\begin{cases} 1,& n=0 \\ 0,& n>0 \end{cases}$.

Taking $n=0,1,\ldots,rkD-1$ in \eqref{crux} and seing $d_{k,\mathbf a,m}(n)$'s as unknowns, we obtain a linear system of type $rkD\times rkD$, whose determinant is $\Delta(r,k,D)$. Therefore, if $\Delta(r,k,D)\neq 0$, then $d_{k,\mathbf a,m}(n)$'s are the solutions of the above system.
Since, by Proposition \ref{p3}, we have 
$$\pak(n)=d_{k,\mathbf a,rk-1}(n)n^{rk-1}+\cdots+d_{k,\mathbf a,1}(n)n+d_{k,\mathbf a,0}(n),$$ 
we get the required result.
\end{proof}

We end this section with the following nice corollary of a result from \cite{graj}.

\begin{cor}\label{grajd}
If $m>1$ is a positive integer, then
$$ \lim_{N\to\infty} \frac{\#\{n\leq N\;:\;\pak(n)\not\equiv 0(\bmod\; m)\}}{N} \geq \frac{1}{k \sum\limits_{i=1}^r a_i}.$$
\end{cor}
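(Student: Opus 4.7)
The plan is to reduce the statement to a direct application of the density result of \cite{graj} through the identity $\pak(n) = p_{\mathbf a[k]}(n)$ already recorded in the paper. The cited result (stated for the ordinary restricted partition function $p_{\mathbf b}$ associated to an arbitrary finite sequence $\mathbf b$ of positive integers) gives a lower bound of the form $1/(b_1 + \cdots + b_s)$ for the natural lower density of $\{n : p_{\mathbf b}(n) \not\equiv 0 \pmod m\}$. The whole argument is then to transport this bound across the identification $\mathbf b = \mathbf a[k]$.

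Concretely, I would proceed as follows. First, recall from \eqref{ak} and the paragraph preceding \eqref{genk} that $\pak(n) = p_{\mathbf a[k]}(n)$, where $\mathbf a[k] = (a_1^{[k]}, \ldots, a_r^{[k]})$ consists of $k$ copies of each $a_i$, so that $\mathbf a[k]$ has $rk$ entries. Second, invoke the density theorem of \cite{graj}: for any positive-integer sequence $\mathbf b = (b_1, \ldots, b_s)$ and any integer $m > 1$,
\[
\liminf_{N\to\infty} \frac{\#\{n \leq N : p_{\mathbf b}(n) \not\equiv 0 \pmod m\}}{N} \geq \frac{1}{b_1 + \cdots + b_s}.
\]
Third, apply this with $\mathbf b = \mathbf a[k]$. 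The sum of the entries of $\mathbf a[k]$ is
\[
\sum_{i=1}^{r} \underbrace{(a_i + \cdots + a_i)}_{k \text{ copies}} = k \sum_{i=1}^{r} a_i,
\]
so the lower bound becomes $\dfrac{1}{k \sum_{i=1}^r a_i}$, which is precisely the claim.

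Since every step except the invocation of \cite{graj} is a direct substitution, there is no real obstacle here: the only thing to verify carefully is that the statement of the result in \cite{graj} applies to sequences with repetitions (which it does, as it is phrased for arbitrary finite multisets or sequences), and that the quantity $b_1 + \cdots + b_s$ in that result corresponds, under our identification, to $k \sum_{i=1}^r a_i$ rather than $\sum_{i=1}^r a_i$. This multiplicity factor $k$ is what produces the $1/k$ factor on the right-hand side of the corollary, reflecting the fact that passing from $\paa$ to $\pak$ enlarges the underlying partition data by a factor of $k$.
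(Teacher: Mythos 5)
Your proposal is correct and is essentially identical to the paper's own proof: the paper also deduces the corollary directly from the identity $\pak(n)=p_{\mathbf a[k]}(n)$ together with the density theorem of \cite{graj} (Theorem 5.2 there), with the factor $k$ arising exactly as you describe from the fact that the entries of $\mathbf a[k]$ sum to $k\sum_{i=1}^r a_i$. The only difference is that you spell out the substitution that the paper leaves implicit.
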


\begin{proof}
It follows from the fact that $\pak(n)=p_{\mathbf a[k]}(n)$ (see \eqref{ak}) and \cite[Theorem 5.2]{graj}.
\end{proof}

\section{The polynomial part and the waves of $\pak(n)$}

Let $\mathbf a=(a_1,\ldots,a_r)$.
Sylvester \cite{sylvester,sylvesterc,sylv} decomposed the restricted partition function $\paa(n)$ in a sum of "waves", 
\begin{equation}\label{wave}
\paa(n)=\sum_{j\geq 1} W_{j}(n,\mathbf a), 
\end{equation}
where the sum is taken over all distinct divisors $j$ of the components of $\mathbf a$ and showed that for each such $j$, 
$W_j(n,\mathbf a)$ is the coefficient of $t^{-1}$ in
$$ \sum_{0 \leq \nu <j,\; \gcd(\nu,j)=1 } \frac{\left(\frac{2\pi \nu a_1 i}{j}\right)^{-\nu n}\cdot e^{nt}}
{(1- e^{-a_1t + \frac{2\pi \nu a_1 i}{j}})\cdots (1- e^{-a_rt + \frac{2\pi \nu a_r i}{j}})},$$
where $\gcd(0,0)=1$ by convention. Note that $W_{j}(n,\mathbf a)$'s are quasi-polynomials of period $j$.
Also, $W_1(n,\mathbf a)$ is called the \emph{polynomial part} of $\paa(n)$ and it is denoted by $\Pa(n)$.
We define:
\begin{equation}
W_j(n,k,\mathbf a):=W_j(n,\mathbf a[k])\text{ for }j\geq 1\text{ and }\Pak(n):=W_1(n,k,\mathbf a),
\end{equation}
the waves, respectively the polynomial part, of $\pak(n)$.

\begin{teor}\label{teo3}
For any positive integer $j$ with $j|a_i$ for some $1\leq i\leq r$, we have that:
\begin{align*}
& W_j(n,k,\mathbf a) = \frac{1}{D(rk-1)!}\sum_{m=1}^{rk-1}\sum_{\ell=1}^j e^{\frac{2\pi \ell i}{j}} \sum_{t=m-1}^{rk-1} \stir{rk}{t+1} \binom{t}{m-1} \times \\
& \times \sum_{\substack{(\ell_1,\ldots,\ell_r)\in\mathbf C \\ a_1\ell_1+\cdots+a_r\ell_r \equiv n(\bmod\;D)}} 
\prod_{s=1}^r \sum_{i_s,j_s\geq 0,\;i_s\frac{D}{a_s}+j_s=\ell_s}(-1)^{i_s}\binom{k}{i_s}\binom{j_s+k-1}{j_s} \times \\
& \times D^{-k}(a_1\ell_1+\cdots+a_r\ell_r)^{t-m+1}n^{m-1},
\end{align*}
where $\mathbf C=\{(\ell_1,\ldots,\ell_r)\;:\;0\leq \ell_1 \leq k(\frac{D}{a_1}-1),\ldots,0\leq \ell_r\leq k(\frac{D}{a_r}-1)\}$.
\end{teor}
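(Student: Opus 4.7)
The plan is to mimic the proofs of Theorems \ref{teo1} and \ref{teo2}. Since $\pak(n)=p_{\mathbf a[k]}(n)$ by \eqref{ak}, the Sylvester wave decomposition commutes with this identification, so $W_j(n,k,\mathbf a)=W_j(n,\mathbf a[k])$. I would first invoke the analogue of \cite[Theorem 2.8]{lucrare} for waves (the companion result used in Theorems \ref{teo1}, \ref{teo2} for the coefficients $d_{k,\mathbf a,m}(n)$), applied to the length-$rk$ sequence $\mathbf a[k]$. This expresses $W_j(n,\mathbf a[k])$ as a multiple sum indexed by $m$, $t$, a root-of-unity index $\ell$ with $1\le \ell\le j$, and tuples $(j_1,\ldots,j_{rk})\in \mathbf{B[k]}$ satisfying the congruence
\[
a_1(j_1+\cdots+j_k)+\cdots+a_r(j_{rk-k+1}+\cdots+j_{rk})\equiv n\pmod D,
\]
weighted by Stirling numbers $\stir{rk}{t+1}$, the binomial $\binom{t}{m-1}$ coming from the $e^{nt}$ expansion in the residue, the root of unity $e^{2\pi \ell i/j}$ coming from the Sylvester generating function, a power of $D$, the linear form $a_1(j_1+\cdots+j_k)+\cdots+a_r(j_{rk-k+1}+\cdots+j_{rk})$ raised to $t-m+1$, and $n^{m-1}$.

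Next I would carry out the same collapsing substitution used in the previous two theorems: set $\ell_s := j_{ks-k+1}+\cdots+j_{ks}$ for $1\le s\le r$, so that a tuple in $\mathbf{B[k]}$ projects onto a tuple $(\ell_1,\ldots,\ell_r)\in\mathbf C$, the congruence becomes $a_1\ell_1+\cdots+a_r\ell_r\equiv n\pmod D$, and the summand depends on the $j_i$'s only through the $\ell_s$'s. The number of preimages of a given $(\ell_1,\ldots,\ell_r)$ equals $\prod_{s=1}^r f_{D/a_s,\ell_s}$, and by Proposition \ref{snl} each factor unfolds as $\sum_{i_s,j_s\ge 0,\; i_s(D/a_s)+j_s=\ell_s}(-1)^{i_s}\binom{k}{i_s}\binom{j_s+k-1}{j_s}$. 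Substituting yields exactly the inner product appearing in the statement.

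The main obstacle is the bookkeeping around the residue computation: one must correctly track that the Sylvester wave $W_j$ is a residue at $t=0$ of a function whose expansion requires pairing $\stir{rk}{t+1}t^{-t-1}$ factors with $\binom{t}{m-1}t^{m-1}$ factors from $e^{nt}=\sum_m \tfrac{(nt)^{m-1}}{(m-1)!}$, producing the index ranges $0\le m-1\le t\le rk-1$ and the exponent $t-m+1$ on the linear form (in contrast to $t-m$ in Theorem \ref{teo1}). A second minor obstacle is confirming that the root-of-unity sum $\sum_{\ell=1}^j e^{2\pi \ell i/j}$ of the unfolded formula specializes, for $j=1$, to what is expected for the polynomial part, giving a consistency check with Theorems \ref{teo4} and \ref{teo5} to come. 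Once the template from \cite{lucrare} is fixed and the variable change $(j_i)\mapsto (\ell_s)$ is performed, the rest of the argument is formally identical to the proofs of Theorems \ref{teo1} and \ref{teo2}.
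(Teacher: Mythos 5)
Your proposal matches the paper's proof: the paper likewise reduces via $W_j(n,k,\mathbf a)=W_j(n,\mathbf a[k])$, invokes the known wave formula for restricted partition functions applied to $\mathbf a[k]$ (the paper cites \cite[Proposition 4.2]{remarks}, which is exactly the ``wave analogue'' of \cite[Theorem 2.8]{lucrare} you postulate), and then performs the same collapsing substitution $\ell_s=j_{ks-k+1}+\cdots+j_{ks}$ together with Proposition \ref{snl}, just as in Theorem \ref{teo1}.
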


\begin{proof}
The proof is similar to the proof of Theorem \ref{teo1}, using \cite[Proposition 4.2]{remarks}.
\end{proof}

\begin{teor}\label{teo4}
For $n\geq 0$ we have that
\begin{align*}
& \Pak(n) = \frac{1}{(rk-1)!} \sum_{(\ell_1,\ldots,\ell_r)\in\mathbf C} 
\prod_{s=1}^r \sum_{i_s,j_s\geq 0,\;i_s\frac{D}{a_s}+j_s=\ell_s}(-1)^{i_s}\binom{k}{i_s}\binom{j_s+k-1}{j_s} \times \\
& \times \prod_{t=1}^{rk-1}\left( \frac{n-a_1\ell_1-\cdots-a_r\ell_r}{D} + t \right),
\end{align*}
where $\mathbf C=\{(\ell_1,\ldots,\ell_r)\;:\;0\leq \ell_1 \leq k(\frac{D}{a_1}-1),\ldots,0\leq \ell_r\leq k(\frac{D}{a_r}-1)\}$.
\end{teor}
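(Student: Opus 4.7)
The plan is to replicate the proof strategy of Theorem \ref{teo2}, but substituting the appropriate formula for the \emph{polynomial part} $\Pa(n)$ of the restricted partition function in place of the full quasi-polynomial formula for $\paa(n)$. There is a companion result to \cite[Corollary 2.10]{lucrare} (essentially \cite[Proposition 4.2]{remarks} specialized to $j=1$, where only the trivial character $\nu=0$ contributes) which states
$$\Pa(n) = \frac{1}{(r-1)!} \sum_{\substack{0 \le j_s \le \frac{D}{a_s}-1 \\ s=1,\ldots,r}} \prod_{t=1}^{r-1}\left(\frac{n - a_1 j_1 - \cdots - a_r j_r}{D} + t\right).$$
This formula is obtained from the one in Theorem \ref{teo2} (in the case $k=1$) by dropping the congruence constraint $a_1 j_1 + \cdots + a_r j_r \equiv n \pmod D$, which reflects the fact that $W_1$ extracts precisely the purely polynomial part of $\paa(n)$.

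The first step is to apply this formula to the sequence $\mathbf a[k]$, using $\Pak(n) = P_{\mathbf a[k]}(n)$. This yields
$$\Pak(n) = \frac{1}{(rk-1)!} \sum_{(j_1,\ldots,j_{rk}) \in \mathbf{B}[k]} \prod_{t=1}^{rk-1}\left(\frac{n - a_1(j_1+\cdots+j_k) - \cdots - a_r(j_{rk-k+1}+\cdots+j_{rk})}{D} + t\right),$$
where $\mathbf{B}[k]$ is the set introduced in the proof of Theorem \ref{teo1}.

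Next, I would perform the same change of variables as in the proof of Theorem \ref{teo1}: for $1 \le s \le r$, set $\ell_s := j_{ks-k+1}+\cdots+j_{ks}$. The summand depends only on $(\ell_1,\ldots,\ell_r)$, which then ranges over $\mathbf C$. For each fixed $(\ell_1,\ldots,\ell_r) \in \mathbf C$, the number of preimages $(j_1,\ldots,j_{rk}) \in \mathbf{B}[k]$ is $\prod_{s=1}^r f_{D/a_s,\,\ell_s}$, and by Proposition \ref{snl} this equals
$$\prod_{s=1}^r \sum_{\substack{i_s,\,j_s \ge 0 \\ i_s \frac{D}{a_s} + j_s = \ell_s}} (-1)^{i_s} \binom{k}{i_s} \binom{j_s + k - 1}{j_s}.$$
Substituting this weight into the previous display produces exactly the claimed expression.

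The main obstacle is the very first step: writing down the right closed-form expression for $\Pa(n)$ and verifying that it agrees with $W_1$ from the Sylvester wave decomposition (i.e., that summing the Theorem \ref{teo2} formula over all residues modulo $D$, without the congruence, recovers the polynomial part). Once this identification is in hand, the rest is purely formal and follows the grouping-and-counting template already validated in Theorem \ref{teo1}.
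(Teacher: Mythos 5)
Your overall strategy is exactly the one the paper uses: its proof of Theorem \ref{teo4} is the single line ``similar to the proof of Theorem \ref{teo1}, using \cite[Corollary 3.6]{lucrare}'', and your second and third steps (apply a closed form for the polynomial part to $\mathbf a[k]$, set $\ell_s=j_{ks-k+1}+\cdots+j_{ks}$, and weight each $(\ell_1,\ldots,\ell_r)\in\mathbf C$ by $\prod_{s=1}^r f_{D/a_s,\ell_s}$ via Proposition \ref{snl}) are precisely the grouping-and-counting argument of Theorem \ref{teo1}. That part of your write-up is sound.

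The genuine gap is the input formula you start from, which you yourself flag as ``the main obstacle'' and assert without proof: it is false as stated. The polynomial part is \emph{not} obtained from the formula of Theorem \ref{teo2} (case $k=1$) by merely dropping the congruence condition; one must also divide by $D$. The cited result \cite[Corollary 3.6]{lucrare} actually reads
$$\Pa(n)=\frac{1}{D\,(r-1)!}\sum_{0\le j_1\le \frac{D}{a_1}-1,\ldots,\,0\le j_r\le \frac{D}{a_r}-1}\ \prod_{t=1}^{r-1}\left(\frac{n-a_1j_1-\cdots-a_rj_r}{D}+t\right),$$
with the extra prefactor $\frac{1}{D}$; heuristically, $\paa(n)$ sums over a single residue class modulo $D$ while $W_1$ is (to leading order) the \emph{average} of the $D$ constituents, not their sum. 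A concrete check: for $\mathbf a=(2)$ and $k=2$ one has $D=2$, $\pak(n)=\frac{n+2}{2}$ for $n$ even and $0$ for $n$ odd, and the coefficient of $t^{-1}$ in $e^{nt}/(1-e^{-2t})^2$ is $\frac{n+2}{4}$; your claimed lemma (and the formula it leads to) gives $\frac{n+2}{2}$ instead. Running your otherwise correct steps with the true lemma produces the prefactor $\frac{1}{D(rk-1)!}$, which incidentally shows that the printed statement of Theorem \ref{teo4} is itself missing a factor $\frac{1}{D}$ (it fails the same numerical check). In other words, your derivation lands on the printed formula only because your misquoted lemma and the paper's typo drop the same factor $\frac 1D$; measured against the actual content of \cite[Corollary 3.6]{lucrare}, which is what the paper's own proof invokes, your first step is where the argument breaks down.
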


\begin{proof}
The proof is similar to the proof of Theorem \ref{teo1}, using \cite[Corollary 3.6]{lucrare}.
\end{proof}

\begin{teor}\label{teo5}
We have 
\begin{align*}
& \Pak(n)= \frac{1}{(a_1\cdots a_r)^k}\sum_{u=0}^{rk-1}\frac{(-1)^u}{(rk-1-u)!} n^{rk-1-u} 
\sum_{\ell_1+\cdots+\ell_r=u} a_1^{\ell_1}\cdots a_r^{\ell_r} \sum_{\substack{i_1+\cdots+i_k=\ell_1 \\ \vdots \\ i_{rk+k-1}+\cdots+i_{rk}=\ell_r}}
  \frac{B_{i_1}\cdots B_{i_{rk}}}{i_1!\cdots i_{rk}!} 
\end{align*}
\end{teor}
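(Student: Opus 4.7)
The plan is to reduce to the known expansion of the polynomial part of an ordinary restricted partition function in terms of Bernoulli--Barnes numbers, and then unfold using formula \eqref{bjak} to get the $B_{i_1}\cdots B_{i_{rk}}$ form. Recall from the identification $\pak(n)=p_{\mathbf a[k]}(n)$ and the definition $\Pak(n):=W_1(n,k,\mathbf a)=W_1(n,\mathbf a[k])$ that $\Pak(n)=P_{\mathbf a[k]}(n)$. So the whole argument will be to apply the classical formula for $P_{\mathbf b}(n)$ to the sequence $\mathbf b=\mathbf a[k]$ of length $rk$, and then convert the Bernoulli--Barnes number $B_u(\mathbf a[k])$ into the triple sum over $u$, $(\ell_1,\ldots,\ell_r)$, and the tuples $(i_1,\ldots,i_{rk})$.

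First I would record the classical formula (used implicitly in \cite{lucrare} and \cite{medi}) that for any sequence $\mathbf b=(b_1,\ldots,b_N)$ of positive integers,
\begin{equation*}
P_{\mathbf b}(n)=\frac{1}{b_1\cdots b_N}\sum_{u=0}^{N-1}\frac{(-1)^u}{u!(N-1-u)!}\,B_u(\mathbf b)\,n^{N-1-u}.
\end{equation*}
Specializing to $\mathbf b=\mathbf a[k]$ (so $N=rk$ and $b_1\cdots b_N=(a_1\cdots a_r)^k$) gives
\begin{equation*}
\Pak(n)=\frac{1}{(a_1\cdots a_r)^k}\sum_{u=0}^{rk-1}\frac{(-1)^u}{u!(rk-1-u)!}\,B_u(\mathbf a[k])\,n^{rk-1-u}.
\end{equation*}

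Next I would substitute formula \eqref{bjak} for $B_u(\mathbf a[k])$. After that, the only remaining task is to absorb the factor $1/u!$ into the nested multinomial coefficients. This is a completely routine identity: using
\begin{equation*}
\frac{1}{u!}\binom{u}{\ell_1,\ldots,\ell_r}\prod_{s=1}^{r}\binom{\ell_s}{i_{k(s-1)+1},\ldots,i_{ks}}=\frac{1}{i_1!\cdots i_{rk}!},
\end{equation*}
valid whenever $\ell_1+\cdots+\ell_r=u$ and each block of $k$ indices $i_{k(s-1)+1},\ldots,i_{ks}$ sums to $\ell_s$, the multinomial factors collapse to $1/(i_1!\cdots i_{rk}!)$ and one obtains precisely the triple sum in the statement. (The paper's index "$i_{rk+k-1}$" in the last constraint is a typo for "$i_{rk-k+1}$".)

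The only real obstacle is deciding how explicitly to quote the classical Bernoulli--Barnes expansion of $P_{\mathbf b}(n)$. If a direct reference is preferred, one can cite it; otherwise a one-line derivation from the generating function $\frac{1}{(1-e^{-b_1 t})\cdots(1-e^{-b_N t})}=\frac{1}{t^N b_1\cdots b_N}\sum_{u\ge 0}(-1)^u B_u(\mathbf b)\frac{t^u}{u!}$ combined with the known identification of $P_{\mathbf b}(n)$ with the Laurent expansion at $t=0$ (the "$j=1$ wave") gives the stated formula immediately. Once that is in place, all remaining steps are bookkeeping and no further obstacle arises.
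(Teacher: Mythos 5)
Your proof is correct and takes essentially the same route as the paper: the paper also identifies $\Pak(n)$ with the polynomial part of $p_{\mathbf a[k]}(n)$ and quotes \cite[Corollary 3.11]{lucrare}, which is exactly your ``classical'' Bernoulli--Barnes expansion with $B_u(\mathbf a[k])/u!$ already written out as a sum over $(i_1,\ldots,i_{rk})$ of $\frac{B_{i_1}\cdots B_{i_{rk}}}{i_1!\cdots i_{rk}!}$ times the appropriate powers of the $a_s$. Your remaining steps (substituting \eqref{bjak} and collapsing the multinomial coefficients) reproduce the paper's final regrouping, and your observation that $i_{rk+k-1}$ in the statement is a typo for $i_{rk-k+1}$ is also correct.
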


\begin{proof}
From \cite[Corollary 3.11]{lucrare} it follows that
$$\Pak(n)= \frac{1}{(a_1\cdots a_r)^k}\sum_{u=0}^{rk-1}\frac{(-1)^u}{(rk-1-u)!}\sum_{i_1+\cdots+i_{rk}=u} 
\frac{B_{i_1}\cdots B_{i_{rk}}}{i_1!\cdots i_{rk}!}a_1^{i_1+\cdots+i_k}\cdots a_r^{i_{rk-k+1}\cdots i_{rk}} n^{rk-1-u} $$
The conclusion follows immediately.
\end{proof}


\section{Conclusions}

We proved new formulas for $\pak(n)$, the restricted $k$-multipartition function associated to a sequence of 
positive integers $\mathbf a =(a_1,\ldots,a_r)$ and to an integer $k\geq 2$, its Sylvester's waves and, in particular, its polynomial part.
Also, we give a lower bound for the density of the set $\{n\geq 0\;:\;\pak(n)\not\equiv 0(\bmod\;m)\}$, where $m\geq 2$.

Our methods are suitable for study other (restricted) integer partition functions.

\subsection*{Acknowledgments} 

The first author (Mircea Cimpoea\c s) was supported by a grant of the Ministry of Research, Innovation and Digitization, CNCS - UEFISCDI, 
project number PN-III-P1-1.1-TE-2021-1633, within PNCDI III.






\end{document}